\newcommand{\cM}{\mathcal{M}}
\newcommand{\cP}{\mathcal{P}}
\newcommand{\N}{\mathbb{N}}
\newcommand{\R}{\mathbb{R}}
\newcommand{\Op}{\operatorname{Op}}
\newtheorem{theorem}{Theorem}[section]
\newtheorem{lemma}[theorem]{Lemma}
\newtheorem{corollary}[theorem]{Corollary}
\theoremstyle{definition}
\newtheorem{definition}[theorem]{Definition}
\title[Boundedness of PDO on Banach Function Spaces]
{Boundedness of Pseudodifferential Operators\\ on Banach Function Spaces}
\author[A. Yu. Karlovich]{Alexei Yu. Karlovich}
\address{
Departamento de Matem\'atica\\
Faculdade de Ci\^encias e Tecnologia\\
Universidade Nova de Lisboa\\
Quinta da Torre\\
2829--516 Caparica\\
Portugal} \email{oyk@fct.unl.pt}
\thanks{The author is partially supported by FCT project PEstOE/MAT/UI4032/2011 (Portugal).}
\dedicatory{To Professor Ant\'onio Ferreira dos Santos}
\begin{document}

\begin{abstract}
We show that if the Hardy-Littlewood maximal operator is bounded on a
separable Banach function space $X(\mathbb{R}^n)$ and on its associate space
$X'(\mathbb{R}^n)$, then a pseudodifferential operator $\operatorname{Op}(a)$ is bounded
on $X(\mathbb{R}^n)$ whenever the symbol $a$ belongs to the H\"ormander class
$S_{\rho,\delta}^{n(\rho-1)}$ with $0<\rho\le 1$, $0\le\delta<1$
or to the the Miyachi class $S_{\rho,\delta}^{n(\rho-1)}(\varkappa,n)$
with $0\le\delta\le\rho\le 1$, $0\le\delta<1$,  and $\varkappa>0$.
This result is applied to the case of variable Lebesgue spaces
$L^{p(\cdot)}(\mathbb{R}^n)$.
\end{abstract}

\maketitle

\section{Introduction}
We denote the usual operators of first order partial
differentiation on $\R^n$ by
$\partial_{x_j}:=\partial/\partial_{x_j}$. For every multi-index
$\alpha=(\alpha_1,\dots,\alpha_n)$ with non-negative integers
$\alpha_j$, we write
$\partial^\alpha:=\partial_{x_1}^{\alpha_1}\dots\partial_{x_n}^{\alpha_n}$.
Further, put $|\alpha|:=\alpha_1+\dots+\alpha_n$, and for each vector
$\xi=(\xi_1,\dots,\xi_n)\in\R^n$, define
$\xi^\alpha:=\xi_1^{\alpha_1}\dots\xi_n^{\alpha_n}$.
Let $\langle\cdot,\cdot\rangle$ stand for the scalar product in $\R^n$
and $|\xi|:=\sqrt{\langle\xi,\xi\rangle}$ for $\xi\in\R^n$.

Let $C_0^\infty(\R^n)$ denote the set of all infinitely
differentiable functions with compact support. Recall that, given
$u\in C_0^\infty(\R^n)$, a pseudodifferential operator $\Op(a)$ is
formally defined by the formula
\[
(\Op(a)u)(x):=\frac{1}{(2\pi)^n}\int_{\R^n}d\xi\int_{\R^n}a(x,\xi)u(y)e^{i\langle x-y,\xi\rangle}dy,
\]
where the symbol $a$ is assumed to be bounded in both the spatial variable $x$ and
the frequency variable $\xi$, and satisfies certain regularity conditions.

An example of symbols one might consider is the H\"ormander class $S_{\rho,\delta}^m$
introduced in \cite{H67} and consisting of $a\in C^\infty(\R^n\times\R^n)$ with
\[
|\partial_\xi^\alpha\partial_x^\beta a(x,\xi)|\le C_{\alpha,\beta}(1+|\xi|)^{m-\rho|\alpha|+\delta|\beta|}
\quad (x,\xi\in\R^n),
\]
where
\[
m\in\R,\quad 0\le\delta,\rho\le 1
\]
and the positive constants $C_{\alpha,\beta}$ depend only on $\alpha$ and $\beta$.
Along with the H\"ormander class $S_{\rho,\delta}^m$, we will consider the
generalized H\"ormander class $S_{\rho,\delta}^m(\varkappa,\varkappa')$
introduced by Miyachi \cite{M88}. We will call $S_{\rho,\delta}^m(\varkappa,\varkappa')$
the Miyachi class of symbols. Its quite technical definition is postponed
to Subsection~\ref{subsec:Miyachi}. Here we only note that symbols in the Miyachi classes
may lie beyond $C^{\infty}(\R^n\times\R^n)$ (that is, they are non-smooth, in general).

Let $f\in L^1_{\rm loc}(\mathbb{R}^n)$.  For a cube
$Q\subset\mathbb{R}^n$, put
\[
f_Q:=\frac{1}{|Q|}\int_Q f(x)dx.
\]
Here, and throughout, cubes will be assumed to have their sides parallel to the
coordinate axes and $|Q|$ will denote the volume of $Q$. The Fefferman-Stein
sharp maximal operator $f\mapsto f^\#$ is defined by
\[
f^\#(x):=\sup_{Q\ni x}\frac{1}{|Q|}\int_Q|f(x)-f_Q|dx
\quad (x\in\R^n),
\]
where the supremum is taken over all cubes $Q$ containing $x$. Let $1\le q<\infty$.
Given $f\in L_{\rm loc}^q(\R^n)$, the $q$-th maximal operator is defined by
\[
(M_qf)(x):=\sup_{Q\ni x}\left(\frac{1}{|Q|}\int_Q|f(y)|^q dy\right)^{1/q}\quad (x\in\R^n),
\]
where the supremum is taken over all cubes $Q$ containing $x$.
For $q=1$ this is the usual Hardy-Littlewood maximal operator, which will be denoted by $M$.

The boundedness of pseudodifferential operators with smooth and non-smooth symbols
on the classical Lebesgue spaces $L^p(\R^n)$ was studied by many authors.
We refer to the monographs by
Coifman and Meyer \cite{CM78},
Kumano-go \cite{K82},
Journ\'e \cite{J83},
Taylor \cite{T91},
Stein \cite{S93},
H\"ormander \cite{H07},
Abels \cite{A12}
and also to the papers by
Miyachi \cite{M88} and
Ashino, Nagase, and Vaillancourt \cite{ANV04}
for corresponding results and further references.

Miller \cite{M82} proved the boundedness of pseudodifferential operators with
symbols $a\in S_{1,0}^0$ on the weighted Lebesgue spaces $L^p(\R^n,w)$
with $1<p<\infty$ and Muckenhoupt weights $w\in A_p(\R^n)$. One of the key
ingredients in his  proof was the pointwise estimate
\begin{equation}\label{eq:pointwise}
(\Op(a)f)^\#(x)\le C_q (M_qf)(x)\quad (x\in\R^n),
\end{equation}
where $q\in(1,\infty)$ and $C_q>0$ is independent of $f\in C_0^\infty(\R^n)$.
Another ingredients are the Fefferman-Stein inequality (see e.g. \cite[Theorem~5]{FS72})
and self-improving properties of Muckenhoupt weights.
Further, estimate \eqref{eq:pointwise} and the boundedness results
for $\Op(a)$ on $L^p(\R^n,w)$ with $p\in(1,\infty)$ and $w\in A_p(\R^n)$ were
extended to other classes of smooth and non-smooth symbols. We refer, for
instance, to the works by
Nishigaki \cite{N84},
Yabuta \cite{Y85a,Y85b,Y86,Y89},
Miyachi and Yabuta \cite{MY87},
\'Alvarez and Hounie \cite{AH90},
\'Alvarez, Hounie, and P\'erez \cite{AHP91},
Michalowski, Rule, and Staubach \cite{MRS12}
and the references therein.

Rabinovich and Samko \cite[Theorem~5.1]{RS08} proved the boundedness of pseudodifferential
operators with symbols $a\in S_{1,0}^0$ on so-called variable Lebesgue spaces $L^{p(\cdot)}(\R^n)$
(see Subsection~\ref{subsec:VLE}). Their proof did not rely on \eqref{eq:pointwise}.
Instead, they obtained another (more precise) pointwise estimate for $(\Op(a)f)^\#(x)$
in the spirit of \cite{AP94}. Recently the author and Spitkovsky \cite[Theorem~1.2]{KS13a}
proved the boundedness of $\Op(a)$ on variable Lebesgue spaces $L^{p(\cdot)}(\R^n)$
for the symbols $a\in S_{\rho,\delta}^{n(\rho-1)}$ with $0<\rho\le 1$ and
$0\le\delta<1$. That proof relies on \eqref{eq:pointwise} (obtained in
\cite{MRS12}), on the Fefferman-Stein inequality for variable Lebesgue spaces,
and on a certain self-improving property of the Hardy-Littlewood maximal
function on $L^{p(\cdot)}(\R^n)$.

The aim of the present paper is to extend the results of \cite{KS13a,RS08} to the
case of so-called Banach function spaces. Our proof is based on estimate
\eqref{eq:pointwise}, on the Fefferman-Stein inequality for Banach function
spaces proved recently by Lerner \cite{L10}, and on a self-improving property
of the Hardy-Littlewood maximal function on Banach function spaces
proved by Lerner and P\'erez \cite{LP07}. Note that our results are true for
all symbols classes admitting estimate \eqref{eq:pointwise}. We choose
here the classical H\"ormander classes $S_{\rho,\delta}^m$ of smooth symbols
and the Miyachi classes $S_{\rho,\delta}^m(\varkappa,\varkappa')$ of non-smooth
symbols just as an illustration of the fact that the assumptions on smoothness
of symbols imposed
in \cite{KS13a,RS08} can be essentially relaxed.

The set of all Lebesgue measurable complex-valued functions on $\R^n$ is
denoted by $\cM$. Let $\cM^+$ be the subset of functions in $\cM$ whose
values lie  in $[0,\infty]$. The characteristic function of a measurable
set $E\subset\R^n$ is denoted by $\chi_E$ and the Lebesgue measure of $E$
is denoted by $|E|$.
\begin{definition}[{\cite[Chap.~1, Definition~1.1]{BS88}}]
\label{def-BFS}
A mapping $\rho:\cM^+\to [0,\infty]$ is
called a {\it Banach function norm} if, for all functions $f,g,
f_n \ (n\in\N)$ in $\cM^+$, for all constants $a\ge 0$, and for
all measurable subsets $E$ of $\R^n$, the following properties
hold:
\begin{eqnarray*}
{\rm (A1)} & & \rho(f)=0  \Leftrightarrow  f=0\ \mbox{a.e.}, \quad
\rho(af)=a\rho(f), \quad
\rho(f+g) \le \rho(f)+\rho(g),\\
{\rm (A2)} & &0\le g \le f \ \mbox{a.e.} \ \Rightarrow \ \rho(g)
\le \rho(f)
\quad\mbox{(the lattice property)},\\
{\rm (A3)} & &0\le f_n \uparrow f \ \mbox{a.e.} \ \Rightarrow \
       \rho(f_n) \uparrow \rho(f)\quad\mbox{(the Fatou property)},\\
{\rm (A4)} & & |E|<\infty \Rightarrow \rho(\chi_E) <\infty,\\
{\rm (A5)} & & |E|<\infty \Rightarrow \int_E f(x)\,dx \le C_E\rho(f)
\end{eqnarray*}
with $C_E \in (0,\infty)$ which may depend on $E$ and $\rho$ but is
independent of $f$.
\end{definition}
When functions differing only on a set of measure zero are identified,
the set $X(\R^n)$ of all functions $f\in\cM$ for which $\rho(|f|)<\infty$ is
called a \textit{Banach function space}. For each $f\in X(\R^n)$, the norm of
$f$ is defined by
\[
\|f\|_{X(\R^n)} :=\rho(|f|).
\]
The set $X(\R^n)$ under the natural linear space operations and under this norm
becomes a Banach space (see \cite[Chap.~1, Theorems~1.4 and~1.6]{BS88}).

If $\rho$ is a Banach function norm, its associate norm $\rho'$ is
defined on $\cM^+$ by
\[
\rho'(g):=\sup\left\{
\int_{\R^n} f(x)g(x)\,dx \ : \ f\in \cM^+, \ \rho(f) \le 1
\right\}, \quad g\in \cM^+.
\]
It is a Banach function norm itself \cite[Chap.~1, Theorem~2.2]{BS88}.
The Banach function space $X'(\R^n)$ determined by the Banach function norm
$\rho'$ is called the \textit{associate space} (\textit{K\"othe dual}) of $X(\R^n)$.
The Lebesgue space $L^p(\R^n)$, $1\le p\le\infty$, are the the archetypical
example of Banach function spaces. Other classical examples of Banach function
spaces are Orlicz spaces, rearrangement-invariant spaces, and variable Lebesgue
spaces $L^{p(\cdot)}(\R^n)$.

Note that we do not assume that $X(\R^n)$ is rearrangement-invariant
(see \cite[Chap.~2]{BS88}). Therefore, we are not allowed to use the interpolation
theory to study the boundedness of $\Op(a)$ on $X(\R^n)$.
\begin{theorem}[Main result]
\label{th:main}
Let $X(\R^n)$ be a separable Banach function space such that the Hardy-Littlewood
maximal operator $M$ is bounded on $X(\R^n)$ and on its associate space $X'(\R^n)$.
If $a$ belongs to one of the following symbol classes:
\begin{enumerate}
\item[{\rm(a)}]
the H\"ormander class $S_{\rho,\delta}^{n(\rho-1)}$ with $0<\rho\le 1$ and $0\le\delta<1$;

\item[{\rm(b)}]
the Miyachi class $S_{\rho,\delta}^{n(\rho-1)}(\varkappa,n)$ with
$0\le\delta\le\rho\le 1$, $0\le\delta<1$,  and $\varkappa>0$;
\end{enumerate}
then $\Op(a)$ extends to a bounded operator on $X(\R^n)$.
\end{theorem}
The paper is organized as follows. Section~\ref{sec:proof} is devoted to the
proof of Theorem~\ref{th:main}. First, we collect the main ingredients. We
give the precise definition of the Miyachi class
$S_{\rho,\delta}^m(\varkappa,\varkappa')$ in Subsection~\ref{subsec:Miyachi}.
The Fefferman-Stein inequality for Banach function spaces is stated in
Section~\ref{subsec:Fefferman-Stein}. A certain self-improving property of
the Hardy-Littlewood maximal operator on Banach function spaces is discussed in
Subsection~\ref{subsec:self-improving}. Precise assumptions on our symbols
guaranteeing \eqref{eq:pointwise} are stated in
Subsection~\ref{subsec:pointwise}. Finally, we assemble these ingredients in
Subsection~\ref{subsec:proof} and prove Theorem~\ref{th:main}.

In Section~\ref{sec:VLE} we apply Theorem~\ref{th:main} to the case of
variable Lebesgue spaces $L^{p(\cdot)}(\R^n)$. In Subection~\ref{subsec:VLE}
we recall the definition and some basic properties of variable Lebesgue spaces.
In Subsection~\ref{subsec:M-VLE} we discuss the boundedness of the Hardy-Littlewood
maximal operator on $L^{p(\cdot)}(\R^n)$. In particular, we recall that
$M$ is bounded on $L^{p(\cdot)}(\R^n)$ if and only if $M$ is bounded on its
associate space. This allows us to simplify little bit the formulation
of Theorem~\ref{th:main} for $L^{p(\cdot)}(\R^n)$ in Subsection~\ref{subsec:PDO-VLE}.
\section{Proof of the main result}\label{sec:proof}
\subsection{The Miyachi class}\label{subsec:Miyachi}
The following class of symbols was introduced by Miyachi \cite{M88} (see also
\cite{M87,MY87}).
If $h\in\R^n$ and $f$ is a function on $\R^n$, then the first and the second
differences are denoted by
\begin{align*}
\Delta_x(h)f(x) &:=f(x+h)-f(x),
\\
\Delta_x^2(h)f(x) &:=f(x+2h)-2f(x+h)+f(x).
\end{align*}
Let
\[
m\in\R,\quad 0\le\delta,\rho\le 1,\quad\varkappa>0,\quad \varkappa'>0.
\]
Let $k$ and $k'$ be nonnegative integers satisfying
\[
k<\varkappa\le k+1,\quad k'<\varkappa '\le k'+1.
\]
The Miyachi class $S_{\rho,\delta}^m(\varkappa,\varkappa')$
consists of all functions $a$ on $\R^n\times\R^n$ such that the
derivatives $\partial_x^\beta\partial_\xi^\alpha a(x,\xi)$ exist
in the classical sense for $|\beta|\le k$ and $|\alpha|\le k'$ and the following
four conditions are fulfilled:

\begin{enumerate}
\item[(i)]
if $|\beta|\le k$ and $|\alpha|\le k'$, then
\[
|\partial_x^\beta\partial_\xi^\alpha a(x,\xi)|\le A(1+|\xi|)^{m+\delta|\beta|-\rho|\alpha|};
\]

\item[(ii)]
if $|\beta|=k$ and $|\alpha|\le k'$, $h\in\R^n$, and $|h|\le(1+|\xi|)^{-\delta}$, then
\[
|\Delta_x^2(h)\partial_x^\beta\partial_\xi^\alpha a(x,\xi)|
\le
A(1+|\xi|)^{m+\delta\varkappa-\rho|\alpha|}|h|^{\varkappa-k};
\]

\item[(iii)]
if $|\beta|\le k$ and $|\alpha|=k'$, $\eta\in\R^n$ and $|\eta|\le(1+|\xi|)^\rho/4$, then
\[
|\Delta_\xi^2(\eta)\partial_x^\beta\partial_\xi^\alpha a(x,\xi)|
\le
A(1+|\xi|)^{m+\delta|\beta|-\rho\varkappa'}|\eta|^{\varkappa'-k'};
\]

\item[(iv)]
if $|\beta|=k$ and $|\alpha|=k'$,
$h,\eta\in\R^n$, and $|h|\le(1+|\xi|)^{-\delta}$, $|\eta|\le(1+|\xi|)^\rho/4$, then
\[
|\Delta_x^2(h)\Delta_\xi^2(\eta)\partial_x^\beta\partial_\xi^\alpha a(x,\xi)|
\le
A(1+|\xi|)^{m+\delta\varkappa-\rho\varkappa'}|h|^{\varkappa-k}|\eta|^{\varkappa'-k'}.
\]
\end{enumerate}
Here the constant $A$ is independent of the multi-indices $\alpha$, $\beta$ and
the variables $x,\xi,h,\eta\in\R^n$. The smallest such constant is denoted by
$\|a\|_{m,\rho,\delta,\varkappa,\varkappa'}$.

It is not difficult to see that if $\varkappa_2\le\varkappa_1$ and $\varkappa_2'\le\varkappa_1'$, then
\[
S_{\rho,\delta}^m\subset S_{\rho,\delta}^m(\varkappa_1,\varkappa_1')\subset S_{\rho,\delta}^m(\varkappa_2,\varkappa_2')
\]
and
\[
\|a\|_{m,\rho,\delta,\varkappa_2,\varkappa_2'}
\le
{\rm const}\|a\|_{m,\rho,\delta,\varkappa_1,\varkappa_1'}.
\]
If $\varkappa$ (resp. $\varkappa'$) is not integer, then $\Delta_x^2(h)$ (resp.
$\Delta_\xi^2(\eta)$) can be replaced by $\Delta_x^1(h)$ (resp. $\Delta_\xi^1(\eta)$).
It should also be remarked that the assumptions $|h|\le(1+|\xi|)^{-\delta}$
and $|\eta|\le(1+|\xi|)^\rho/4$ can be replaced by $h\in\R^n$ and $|\eta|\le(1+|\xi|)/4$
if one modifies the constant $A$.
\subsection{Density of smooth compactly supported functions}
\begin{lemma}\label{le:density}
The set $C_0^\infty(\R^n)$ is dense in a separable Banach function space $X(\R^n)$.
\end{lemma}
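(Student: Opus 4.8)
The plan is to establish density of $C_0^\infty(\R^n)$ in $X(\R^n)$ in two stages: first showing that simple functions (finite linear combinations of characteristic functions of sets of finite measure) are dense, then approximating each such function by smooth compactly supported functions via mollification. The first stage is a standard consequence of separability together with the structure of Banach function spaces; indeed, by \cite[Chap.~1, Theorem~3.13]{BS88} (or directly from the definition of a Banach function space), the simple functions are dense in $X(\R^n)$ precisely because $X(\R^n)$ has absolutely continuous norm when it is separable, or one may invoke that separability forces every element to be approximable in norm by a countable dense set whose members in turn are approximable by simple functions. So I would begin by recording: since $X(\R^n)$ is separable, it has absolutely continuous norm (this equivalence is in \cite{BS88}), and hence the simple functions are dense in $X(\R^n)$.

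Next I would reduce to approximating a single characteristic function $\chi_E$ with $|E|<\infty$. By inner regularity of Lebesgue measure, choose a compact set $K\subset E$ with $|E\setminus K|$ small; by (A3) (the Fatou/monotone convergence property applied to $\chi_{E\setminus K}\le\chi_E$, or rather to an increasing sequence) together with absolute continuity of the norm, $\|\chi_E-\chi_K\|_{X(\R^n)}$ is small. Similarly, by outer regularity choose a bounded open set $U\supset K$ with $|U\setminus K|$ small, so $\|\chi_U-\chi_K\|_{X(\R^n)}$ is small. Now take $\varphi\in C_0^\infty(\R^n)$ with $\chi_K\le\varphi\le\chi_U$ (a standard smooth Urysohn function, supported in $U$, hence compactly supported); then $|\varphi-\chi_K|\le\chi_{U\setminus K}$ pointwise, so by the lattice property (A2), $\|\varphi-\chi_K\|_{X(\R^n)}\le\|\chi_{U\setminus K}\|_{X(\R^n)}$, which is small. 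Combining the three estimates gives $\|\varphi-\chi_E\|_{X(\R^n)}$ small, and taking finite linear combinations handles general simple functions.

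The main obstacle is the passage from norm convergence of characteristic functions on shrinking symmetric differences to actual smallness in the $X(\R^n)$-norm: a priori, $\|\chi_{F}\|_{X(\R^n)}$ need not tend to $0$ as $|F|\to 0$ for an arbitrary Banach function space (this fails, e.g., for $L^\infty$). This is exactly where separability is essential, via the equivalence with absolute continuity of the norm: for a function $g$ of absolutely continuous norm, $\|g\chi_{F_k}\|_{X(\R^n)}\to 0$ whenever $|F_k|\to 0$ (indeed whenever $\chi_{F_k}\to 0$ a.e.), and applying this with $g=\chi_E$ (which lies in $X(\R^n)$ by (A4)) yields precisely the smallness we need. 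So the one delicate point to get right is invoking the separability$\Leftrightarrow$absolute-continuity characterization from \cite[Chap.~1]{BS88} and then feeding in the measure-theoretic regularity of Lebesgue measure; everything else is the routine Urysohn-function argument above.
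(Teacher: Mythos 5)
Your argument is correct and is essentially the standard proof that the paper itself does not spell out but delegates to \cite[Lemma~2.10(b)]{KS13b}: separability of $X(\R^n)$ gives absolutely continuous norm via \cite[Chap.~1]{BS88}, hence density of simple functions, and then regularity of Lebesgue measure plus a smooth Urysohn/mollification step finishes the approximation of characteristic functions of finite-measure sets. The only points to polish are routine: in the outer-regularity step apply absolute continuity of the norm to the characteristic function of a fixed bounded open set containing $K$ (not to $\chi_E$), and pass to nested sequences of sets so that $\chi_{F_k}\to 0$ a.e. rather than merely $|F_k|\to 0$.
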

The proof is standard. For details, see \cite[Lemma~2.10(b)]{KS13b}, where
this fact is proved for $n=1$. The proof for arbitrary $n$ is a minor modification
of that one.
\subsection{The Fefferman-Stein inequality for Banach function spaces}
\label{subsec:Fefferman-Stein}
Let $S_0(\R^n)$ be the space of all measurable functions $f$ on $\R^n$ such that
\[
|\{x\in\R^n:|f(x)|>\lambda\}|<\infty
\]
for any $\lambda>0$. Chebyshev's inequality
\[
|\{x\in\R^n:|f(x)|>\lambda\}|\le\frac{1}{\lambda^q}\int_{\R^n}|f(x)|^q\,dx
\]
holds for every $q\in(0,\infty)$ and $\lambda>0$. In particular, it implies that
\[
\bigcup_{q\in(0,\infty)}L^q(\R^n)\subset S_0(\R^n).
\]

It is obvious that $f^\#$ is pointwise dominated by $Mf$. Hence,
by Axiom (A2),
\[
\|f^\#\|_{X(\R^n)}\le {\rm const}\|f\|_{X(\R^n)}
\quad\mbox{for}\quad f\in X(\R^n)
\]
whenever $M$ is bounded on $X(\R^n)$. The converse inequality for Lebesgue spaces
$L^p(\R^n)$, $1<p<\infty$, was proved by Fefferman and Stein (see
\cite[Theorem~5]{FS72} and also
\cite[Chap.~IV, Section~2.2]{S93}). The following extension of the Fefferman-Stein
inequality to Banach function spaces was proved in \cite[Corollary~4.2]{L10}.
\begin{theorem}[Lerner]
\label{th:Lerner}
Let $M$ be bounded on a Banach function space $X(\R^n)$. Then $M$ is bounded
on its associate space $X'(\R^n)$ if and only
if there exists a constant $C_\#>0$ such that, for all $f\in S_0(\R^n)$,
\[
\|f\|_{X(\R^n)}\le C_\#\|f^\#\|_{X(\R^n)}.
\]
\end{theorem}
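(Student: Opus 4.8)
\emph{Proof plan.} The two implications are quite different in character, but both rest on Lerner's pointwise ``local mean oscillation'' decomposition, which I would take from \cite{L10} as the main imported tool. Recall that for a cube $Q$, a number $0<\lambda<1$, and a measurable $f$, the local mean oscillation is
\[
\omega_\lambda(f;Q):=\inf_{c\in\R}\big((f-c)\chi_Q\big)^*\big(\lambda|Q|\big),
\]
where $u^*$ is the nonincreasing rearrangement of $u$; Chebyshev's inequality gives $\omega_\lambda(f;Q)\le\lambda^{-1}|Q|^{-1}\int_Q|f-f_Q|\,dx\le\lambda^{-1}\inf_{y\in Q}f^\#(y)$. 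Lerner's theorem then says that for $f\in S_0(\R^n)$ --- so that the medians of $f$ over cubes exhausting $\R^n$ tend to $0$ --- there is a \emph{sparse} family $\mathcal S$ of dyadic cubes (i.e.\ one carrying pairwise disjoint sets $E_Q\subset Q$ with $|E_Q|\ge\frac12|Q|$) and a dimensional constant $c_n$ with $|f(x)|\le c_n\sum_{Q\in\mathcal S}\omega_{1/4}(f;Q)\chi_Q(x)$ for a.e.\ $x$. Observe that the inequality $\|f^\#\|_{X(\R^n)}\le\mathrm{const}\,\|f\|_{X(\R^n)}$ is automatic (it is recorded just above the theorem), so the entire content is the reverse bound.

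For the implication ``$M$ bounded on $X'(\R^n)$ $\Rightarrow$ Fefferman--Stein'', I fix $f\in S_0(\R^n)$ and may assume $\|f^\#\|_{X(\R^n)}<\infty$. Combining Lerner's decomposition with $\omega_{1/4}(f;Q)\le4\inf_{y\in Q}f^\#(y)$ and the lattice property (A2), it suffices to bound the $X(\R^n)$-norm of $\sum_{Q\in\mathcal S}(\inf_Q f^\#)\chi_Q$. Since $X(\R^n)$ is norming for $X'(\R^n)$, that is, $X''(\R^n)=X(\R^n)$, I would compute, for $h\ge0$ with $\|h\|_{X'(\R^n)}\le1$,
\[
\int_{\R^n}\Big(\sum_{Q\in\mathcal S}\big(\inf_Q f^\#\big)\chi_Q\Big)h\,dx=\sum_{Q\in\mathcal S}\big(\inf_Q f^\#\big)\int_Q h\le\sum_{Q\in\mathcal S}|Q|\,\big(\inf_Q f^\#\big)\big(\inf_Q Mh\big)\le\sum_{Q\in\mathcal S}|Q|\inf_Q\big(f^\#\,Mh\big),
\]
using $h_Q\le(Mh)(y)$ for $y\in Q$. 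Sparseness gives $|Q|\inf_Q(f^\#Mh)\le2\int_{E_Q}f^\#Mh$, and summing over the disjoint $E_Q$, together with the H\"older inequality for Banach function spaces and the boundedness of $M$ on $X'(\R^n)$, yields a bound by $2\|f^\#\|_{X(\R^n)}\|Mh\|_{X'(\R^n)}\le\mathrm{const}\,\|f^\#\|_{X(\R^n)}$. Taking the supremum over $h$ finishes this direction.

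For the converse ``Fefferman--Stein $\Rightarrow$ $M$ bounded on $X'(\R^n)$'' --- which I expect to be the main obstacle, since here a hypothesis living on $X(\R^n)$ must be turned into a statement about $X'(\R^n)$ --- my plan is again duality: using $X''(\R^n)=X(\R^n)$, it is enough to show $\int_{\R^n}(Mg)f\,dx\lesssim\|g\|_{X'(\R^n)}\|f\|_{X(\R^n)}$ for nonnegative $f\in X(\R^n)$, $g\in X'(\R^n)$. By the three-lattice theorem one reduces to a dyadic maximal operator $M_d$, and a Calder\'on--Zygmund stopping-time decomposition of $g$ replaces $M_dg$ by a sparse sum $\sum_{Q\in\mathcal S}g_Q\chi_Q$, so that the matter comes down to estimating $\sum_{Q\in\mathcal S}g_Q\int_Q f$. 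A plain H\"older estimate here is circular; the Fefferman--Stein hypothesis must enter through a reverse, $A_\infty$-type balance between $\|\chi_Q\|_{X(\R^n)}$ and $\|\chi_Q\|_{X'(\R^n)}$ along the stopping cubes. This is precisely Lerner's local mean oscillation characterization of the boundedness of $M$ on a Banach function space, which I would invoke from \cite{L10}; granting it, the resulting packing estimate closes the sum and shows that $M$ is bounded on $X'(\R^n)$.
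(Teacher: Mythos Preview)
The paper does not prove this theorem; it is quoted without proof from \cite[Corollary~4.2]{L10}, so there is no in-paper argument to compare your sketch against. What you have written already goes further than the paper.

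For the direction ``$M$ bounded on $X'(\R^n)\Rightarrow$ Fefferman--Stein'' your argument is correct and is essentially Lerner's own: the local mean oscillation decomposition dominates $|f|$ by a sparse sum controlled by $\inf_Q f^\#$, and duality together with sparseness and the boundedness of $M$ on $X'(\R^n)$ closes the estimate. (The step $(\inf_Q f^\#)(\inf_Q Mh)\le\inf_{y\in Q}\big(f^\#(y)\,Mh(y)\big)$, which can look suspicious, is fine since each factor on the left is a lower bound valid at every $y\in Q$.)

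For the converse you have not given a proof. You reduce, correctly, to a sparse bilinear form and observe that a direct H\"older estimate is circular; but then you propose to import ``Lerner's local mean oscillation characterization of the boundedness of $M$'' from \cite{L10} to finish. That characterization lies in the same circle of results as the theorem under discussion, so invoking it amounts to citing \cite{L10} for the hard half --- which is exactly what the present paper does anyway. If you want an actual argument for this implication, the missing idea is to \emph{test} the Fefferman--Stein hypothesis on explicit functions built from the stopping family (so that $f^\#$ can be computed, up to constants, in terms of the $\chi_Q$), and to read off from the resulting inequality the Carleson-type packing control on the quantities $\|\chi_Q\|_{X(\R^n)}\|\chi_Q\|_{X'(\R^n)}/|Q|$ that is equivalent to the boundedness of $M$ on $X'(\R^n)$. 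Your sketch names the destination (``$A_\infty$-type balance'') but not the construction that reaches it.
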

\subsection{Self-improving property of maximal operators on Banach function spaces}
\label{subsec:self-improving}
If $1<q<\infty$, then from the H\"older inequality one can immediately get that
\[
(Mf)(x)\le (M_qf)(x)\quad (x\in\R^n).
\]
Thus, the boundedness of any $M_q$, $1<q<\infty$, on a Banach function space
$X(\R^n)$  immediately implies the boundedness of $M$. A partial converse
of this fact, called a \textit{self-improving property} of the Hardy-Littlewood
maximal operator, is also true. It was proved in
\cite[Corollary~1.3]{LP07} (see also \cite{LO10} for another proof)
in a more general setting of quasi-Banach function spaces.
\begin{theorem}[Lerner-P\'erez]
\label{th:Lerner-Perez}
Let $X(\R^n)$ be a Banach function space. Then $M$ is bounded on $X(\R^n)$ if
and only if $M_q$ is bounded on $X(\R^n)$ for some $q\in(1,\infty)$.
\end{theorem}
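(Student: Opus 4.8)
The implication ``$M_q$ bounded on $X(\R^n)$ for some $q\in(1,\infty)$ $\Longrightarrow$ $M$ bounded on $X(\R^n)$'' is the trivial one and was already observed above (from $Mf\le M_qf$ pointwise and Axiom~(A2)), so I would concentrate on the converse. The plan is to localize to translated dyadic lattices, to majorize an arbitrary $0\le f\in X(\R^n)$ by a weight $w$ lying in the dyadic Muckenhoupt class $A_1$ with $\|w\|_{X(\R^n)}\le 2\|f\|_{X(\R^n)}$ via the Rubio de Francia iteration scheme (available precisely because $M$ is bounded on $X(\R^n)$), and then to exploit the self-improvement of $A_1$ weights provided by the reverse H\"older inequality. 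Throughout we may assume $f\ge 0$, since $M_q$ depends only on $|f|$.

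First I would reduce to the dyadic setting by the ``one-third trick'': there exist $N=N(n)$ translated dyadic lattices $\mathcal{D}_1,\dots,\mathcal{D}_N$ and a constant $c_n>0$ such that every cube $Q\subset\R^n$ is contained in some $R\in\mathcal{D}_i$ with $|R|\le c_n|Q|$. Writing $M^{\mathcal{D}_i}$ for the dyadic maximal operator over $\mathcal{D}_i$ and $M_q^{\mathcal{D}_i}f:=\big(M^{\mathcal{D}_i}(f^q)\big)^{1/q}$, this yields
\[
M_qf\le c_n^{1/q}\sum_{i=1}^N M_q^{\mathcal{D}_i}f .
\]
Since $M^{\mathcal{D}_i}g\le C_nMg$ pointwise, each $M^{\mathcal{D}_i}$ is bounded on $X(\R^n)$ with norm $C_i\le C_n\|M\|_{X(\R^n)\to X(\R^n)}<\infty$; hence, by Axiom~(A1), it suffices to bound each $M_q^{\mathcal{D}_i}$ on $X(\R^n)$ for a suitable $q>1$. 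Fix $i$ and abbreviate $\mathcal{D}:=\mathcal{D}_i$, $M^{\mathcal{D}}:=M^{\mathcal{D}_i}$, $C_0:=C_i$.

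Next I would run the Rubio de Francia algorithm: put $\mathcal{R}f:=\sum_{k=0}^\infty (M^{\mathcal{D}})^kf/(2C_0)^k$, with $(M^{\mathcal{D}})^0f:=f$. Since $\|(M^{\mathcal{D}})^kf\|_{X(\R^n)}\le C_0^k\|f\|_{X(\R^n)}$, the triangle inequality and the Fatou property (Axiom~(A3)) show that $w:=\mathcal{R}f$ lies in $X(\R^n)$ with $\|w\|_{X(\R^n)}\le 2\|f\|_{X(\R^n)}$; moreover $f\le w$ and, by subadditivity and positive homogeneity of $M^{\mathcal{D}}$ together with monotone convergence, $M^{\mathcal{D}}w\le 2C_0\,w$ a.e., i.e.\ $w$ is a dyadic $A_1$ weight with $[w]_{A_1^{\mathcal{D}}}\le 2C_0$. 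By the reverse H\"older inequality for $A_1$ weights---whose classical Calder\'on--Zygmund proof applies verbatim over the fixed lattice $\mathcal{D}$---there are $s=s(n,C_0)>1$ and $C_{\mathrm{RH}}=C_{\mathrm{RH}}(n,C_0)>0$ with $\big(\frac{1}{|Q|}\int_Q w^s\big)^{1/s}\le C_{\mathrm{RH}}\,\frac{1}{|Q|}\int_Q w$ for all $Q\in\mathcal{D}$. Choose $q\in(1,s]$. Then for every $Q\in\mathcal{D}$ and a.e.\ $x\in Q$, using $f\le w$, Jensen's inequality (valid since $q\le s$), the reverse H\"older inequality, and the $A_1^{\mathcal{D}}$ condition,
\[
\frac{1}{|Q|}\int_Q f^q\ \le\ \Big(\frac{1}{|Q|}\int_Q w^s\Big)^{q/s}\ \le\ C_{\mathrm{RH}}^q\Big(\frac{1}{|Q|}\int_Q w\Big)^q\ \le\ \big(2C_0C_{\mathrm{RH}}\big)^q\,w(x)^q .
\]
Taking the supremum over dyadic $Q\ni x$ gives $M_q^{\mathcal{D}}f\le 2C_0C_{\mathrm{RH}}\,w$ a.e., hence by Axiom~(A2)
\[
\big\|M_q^{\mathcal{D}}f\big\|_{X(\R^n)}\ \le\ 2C_0C_{\mathrm{RH}}\,\|w\|_{X(\R^n)}\ \le\ 4C_0C_{\mathrm{RH}}\,\|f\|_{X(\R^n)} .
\]
Combined with the one-third reduction (take $q:=\min_i s(n,C_i)$, a number depending only on $n$ and $\|M\|_{X(\R^n)\to X(\R^n)}$), this gives boundedness of $M_q$ on $X(\R^n)$.

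The step I expect to require the most care is making the Rubio de Francia construction legitimate: that the defining series genuinely converges to an element $w\in X(\R^n)$ and that $w$ is a bona fide $A_1$ weight. This is precisely where the Banach-function-space axioms are used---the triangle inequality and the Fatou property to pass to the limit in the series, and Axiom~(A5) to guarantee $f\in L^1_{\mathrm{loc}}(\R^n)$ so that all the averages and iterated dyadic maximal functions are meaningful. A secondary point worth verifying is that the reverse H\"older inequality for $A_1$ weights, usually stated for the standard dyadic grid, holds---with constants depending only on $n$ and on the $A_1$ constant---over an arbitrary translated dyadic lattice; this follows from the standard stopping-time argument without change.
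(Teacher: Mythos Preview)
The paper does not contain its own proof of this theorem: it is quoted as a known result, with a reference to \cite[Corollary~1.3]{LP07} and to \cite{LO10} for an alternative proof. So there is no ``paper's proof'' to compare against; one can only assess your argument on its merits and against the cited sources.

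Your proof is correct. The Rubio de Francia iteration, run with the dyadic maximal operator (bounded on $X(\R^n)$ because it is pointwise dominated by $M$), produces for each $0\le f\in X(\R^n)$ a dyadic $A_1$ weight $w\ge f$ with $[w]_{A_1^{\mathcal{D}}}$ controlled solely by $\|M\|_{X\to X}$ and $n$; the reverse H\"older inequality then furnishes a uniform exponent $s>1$ for which $M_s^{\mathcal{D}}f\lesssim w$, and the one--third trick patches the finitely many shifted lattices back together. The places you flag as delicate are handled correctly: Axiom~(A5) ensures $X(\R^n)\subset L^1_{\mathrm{loc}}(\R^n)$ so every iterate $(M^{\mathcal{D}})^k f$ is well defined; the Fatou property (A3) lets you pass to the series limit inside the norm; and the Calder\'on--Zygmund stopping--time proof of reverse H\"older uses only the nested structure of a dyadic grid, so it transfers verbatim to any translated lattice with constants depending only on $n$ and the $A_1$ constant. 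One cosmetic remark: since each $C_i\le C_n\|M\|_{X\to X}$, you may take a single $C_0$ and a single reverse H\"older exponent for all lattices, rather than taking a minimum at the end.

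As for the literature: the approach you give is essentially the one in \cite{LO10} (the ``another proof'' the paper points to), which also proceeds via Rubio de Francia extrapolation and self-improvement of $A_1$ weights. The original argument in \cite{LP07} is more indirect, going through a Lorentz--Shimogaki--type characterization of the boundedness of $M$; your route is more elementary and entirely adequate for the statement as formulated here.
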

\subsection{The crucial pointwise estimate}\label{subsec:pointwise}
\begin{theorem}\label{th:pointwise}
If $a$ belongs to one of the following symbol classes:
\begin{enumerate}
\item[{\rm(a)}]
the H\"omander class
$S_{\rho,\delta}^{n(\rho-1)}$ with $0<\rho\le 1$ and $0\le\delta<1$;

\item[{\rm(b)}]
the Miyachi class
$S_{\rho,\delta}^{n(\rho-1)}(\varkappa,n)$ with $0\le\delta\le\rho\le 1$, $0\le\delta<1$,  and $\varkappa>0$;
\end{enumerate}
then for every $q\in(1,\infty)$ there exists a constant $C_q>0$ such that
\begin{equation}\label{eq:pointwise-2}
(\Op(a)f)^\#(x)\le C_q (M_qf)(x)\quad (x\in\R^n)
\end{equation}
for all $f\in C_0^\infty(\R^n)$.
\end{theorem}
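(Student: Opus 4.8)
The plan is to reduce \eqref{eq:pointwise-2} to pointwise estimates for the kernel of $\Op(a)$ and then carry out the standard Calder\'on--Zygmund splitting of $f$ into a local and a non-local piece. Fix $x\in\R^n$ and an arbitrary cube $Q\ni x$ with center $x_Q$ and side length $\ell$, and let $Q^*$ be a concentric dilate of $Q$ large enough that $\operatorname{dist}(Q,\R^n\setminus Q^*)\ge\ell$ and $|Q^*|\le c_n|Q|$. Write $f=g+h$ with $g:=f\chi_{Q^*}$ and $h:=f\chi_{\R^n\setminus Q^*}$, and set $c_Q:=(\Op(a)h)(x_Q)$. Since $(\Op(a)f)^\#(x)$ is comparable to $\sup_{Q\ni x}\inf_{c\in\C}|Q|^{-1}\int_Q|\Op(a)f-c|\,dy$, it suffices to show that, uniformly over cubes $Q\ni x$,
\[
\frac{1}{|Q|}\int_Q|(\Op(a)f)(y)-c_Q|\,dy
\le
\frac{1}{|Q|}\int_Q|(\Op(a)g)(y)|\,dy
+\frac{1}{|Q|}\int_Q\bigl|(\Op(a)h)(y)-(\Op(a)h)(x_Q)\bigr|\,dy
=:I+II
\]
is controlled by a constant multiple of $(M_qf)(x)$; I estimate $I$ and $II$ separately.

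For the local term $I$ the only input is the boundedness of $\Op(a)$ on $L^q(\R^n)$ for every $q\in(1,\infty)$: for the H\"ormander class $S_{\rho,\delta}^{n(\rho-1)}$ with $0<\rho\le1$, $0\le\delta<1$ this is the classical $L^p$-boundedness of pseudodifferential operators with such symbols (see \cite{MRS12} and the references therein), while for the Miyachi class $S_{\rho,\delta}^{n(\rho-1)}(\varkappa,n)$ it is due to Miyachi \cite{M88}. H\"older's inequality together with $|Q^*|\le c_n|Q|$ and $x\in Q\subset Q^*$ then gives
\[
I\le\left(\frac{1}{|Q|}\int_Q|(\Op(a)g)(y)|^q\,dy\right)^{1/q}
\le\frac{\|\Op(a)\|_{L^q\to L^q}}{|Q|^{1/q}}\left(\int_{Q^*}|f(y)|^q\,dy\right)^{1/q}
\le C_q\,(M_qf)(x).
\]

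For the non-local term $II$ I would write $(\Op(a)h)(y)=\int_{\R^n\setminus Q^*}k(y,y-z)f(z)\,dz$, where $k(x,\cdot)$ denotes the oscillatory-integral kernel of $\Op(a)$, which is a genuine function off the diagonal, so that
\[
II\le\frac{1}{|Q|}\int_Q\int_{\R^n\setminus Q^*}\bigl|k(y,y-z)-k(x_Q,x_Q-z)\bigr|\,|f(z)|\,dz\,dy.
\]
The crucial ingredient is a Calder\'on--Zygmund-type regularity estimate for this kernel: there exists $\sigma=\sigma(n,\rho,\delta)>0$ (depending also on $\varkappa$ in case (b)) such that
\[
\bigl|k(y,y-z)-k(x_Q,x_Q-z)\bigr|\le C\,\frac{\ell^{\sigma}}{|z-x_Q|^{n+\sigma}}
\qquad(y\in Q,\ |z-x_Q|\ge\ell).
\]
Decomposing $\R^n\setminus Q^*$ into the dyadic shells $2^jQ^*\setminus 2^{j-1}Q^*$, $j\ge1$, summing the resulting geometric series in $2^{-j\sigma}$, and using $|2^jQ^*|\simeq(2^j\ell)^n$ together with $x\in 2^jQ^*$, one gets $II\le C\,(Mf)(x)\le C\,(M_qf)(x)$. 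Adding the bounds for $I$ and $II$ and taking the supremum over all cubes $Q\ni x$ yields \eqref{eq:pointwise-2}.

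The hard part is the kernel regularity estimate, and in particular producing a \emph{strictly positive} exponent $\sigma$ in the H\"older bound in the spatial variable. I would derive it on the symbol side via a dyadic Littlewood--Paley decomposition $a=\sum_j a_j$ in the frequency variable: the kernel $k_j$ of $\Op(a_j)$ is smooth and satisfies $|k_j(x,x-z)|\lesssim 2^{j(n+m)}(1+2^{j\rho}|z|)^{-L}$ for every $L$, with each first difference in $x$ costing a factor $2^{j\delta}$; in the Miyachi case these bounds have to be read off from the finite-difference conditions (ii)--(iv) rather than from derivatives of the (possibly non-smooth) symbol, which is where $\varkappa>0$ and the choice $\varkappa'=n$ are used. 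Summing over $j$ with $m=n(\rho-1)$ and exploiting $\delta<1$ leaves the gain $\ell^{\sigma}$ with $\sigma>0$. This computation is the technical core and is carried out in \cite{MRS12} for part (a) and, in essentially this form, in \cite{M88,MY87} for part (b); in the write-up I would quote their kernel estimates and simply assemble the $I+II$ argument above.
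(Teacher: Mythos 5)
Your overall $I+II$ scheme is the classical Calder\'on--Zygmund one (it is essentially Miller's argument for $S^0_{1,0}$), but the step you yourself identify as the core --- the pointwise kernel regularity bound $|k(y,y-z)-k(x_Q,x_Q-z)|\le C\,\ell^{\sigma}|z-x_Q|^{-n-\sigma}$ with some fixed $\sigma>0$ --- is false at the critical order $m=n(\rho-1)$ once $\rho<1$, so the proposal has a genuine gap precisely where it matters. Operators with symbols in $S_{\rho,\delta}^{n(\rho-1)}$, $\rho<1$, are not Calder\'on--Zygmund operators: the worst-case kernel bounds are $|k(x,z)|\lesssim|z|^{-n}$ but $|\nabla_z k(x,z)|\lesssim|z|^{-n-1/\rho}$ for $|z|\le1$, i.e.\ the kernel can vary at the much finer spatial scale $|z|^{1/\rho}\ll|z|$. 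A concrete obstruction: take the $x$-independent symbol $a(\xi)=\sum_j 2^{jn(\rho-1)}\varphi(2^{-j}\xi)e^{i\langle v_j,\xi\rangle}$ with $\varphi$ a fixed annular bump and $|v_j|=2^{-j\rho}$; one checks directly that $a\in S_{\rho,0}^{n(\rho-1)}$, while its kernel is a sum of bumps of height $r_j^{-n}$ and width $r_j^{1/\rho}$ located at distance $r_j=2^{-j\rho}$ from the diagonal. Moving the argument by any $\ell$ with $r_j^{1/\rho}\ll\ell\ll r_j$ changes the kernel by the full amount $\sim r_j^{-n}$, whereas your claimed bound would force this to be $\lesssim(\ell/r_j)^{\sigma}r_j^{-n}\ll r_j^{-n}$. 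So no $\sigma>0$ works, and the dyadic-shell summation for $II$ (and the conclusion $II\lesssim Mf(x)$) collapses in the regime of small cubes with $\ell\le|z-x_Q|\le1$; only the region $|z-x_Q|\ge1$, where the kernel has rapid decay, is safe.

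Relatedly, the closing claim that the needed kernel estimate ``is carried out in \cite{MRS12} for part (a) and in \cite{M88,MY87} for part (b)'' is not accurate: those papers do not prove a Calder\'on--Zygmund-type H\"older estimate for the kernel (they cannot, by the above), but prove the sharp-function bound directly, via a Littlewood--Paley decomposition of the symbol, a case analysis on the side length of the cube relative to $1$, $L^2$/$L^q$ bounds on the frequency-localized pieces for the near part, and kernel decay only at unit distances --- with the $q$-average $M_qf$ (not $Mf$) also appearing in the estimate of the non-local part. For comparison, the paper under review does not reprove the statement at all: it quotes \cite[Theorem~3.3]{MRS12} for part (a) and \cite[Theorem~2.4]{MY87} for part (b), noting that these generalize \cite[Theorem~2.8]{M82} and \cite[Theorem~4.1]{AH90}. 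Your argument as written is correct only in the case $\rho=1$ (where the kernels genuinely are Calder\'on--Zygmund); to cover $0<\rho<1$, and the non-smooth Miyachi symbols with only $\varkappa>0$ regularity in $x$, you would have to abandon the pointwise kernel H\"older condition and either run the frequency-decomposed argument of \cite{MRS12,MY87} or replace it by an integral ($L^{q'}$-H\"ormander-type) condition on the kernel, which is a substantially different proof.
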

Part (a) was recently proved by Michalowski, Rule, and Staubach \cite[Theorem~3.3]{MRS12}.
Their estimate generalizes the pointwise estimate by Miller
\cite[Theorem~2.8]{M82} for $a\in S_{1,0}^0$ and by \'Alvarez and
Hounie \cite[Theorem~4.1]{AH90} for $a\in S_{\rho,\delta}^m$ with
the parameters satisfying $0<\delta\le\rho\le 1/2$ and $m\le n(\rho-1)$.
Part (b) follows from the estimate by Miyachi and Yabuta \cite[Theorem~2.4]{MY87}.
\begin{corollary}\label{co:So}
If the conditions of Theorem~\ref{th:pointwise} are fulfilled, then
$\Op(a)f\in S_0(\R^n)$ for every $f\in C_0^\infty(\R^n)$.
\end{corollary}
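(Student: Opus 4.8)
The goal is to prove Corollary~\ref{co:So}: that $\Op(a)f \in S_0(\R^n)$ for every $f \in C_0^\infty(\R^n)$, under the hypotheses of Theorem~\ref{th:pointwise}.

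\medskip

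\textbf{Proof plan.} The plan is to combine the crucial pointwise estimate~\eqref{eq:pointwise-2} with the elementary observation, already recorded in Subsection~\ref{subsec:Fefferman-Stein}, that $\bigcup_{q\in(0,\infty)}L^q(\R^n)\subset S_0(\R^n)$. Fix $f\in C_0^\infty(\R^n)$ and fix some $q\in(1,\infty)$. First I would invoke the classical $L^p$-boundedness of $M_q$: since $f\in C_0^\infty(\R^n)\subset L^p(\R^n)$ for every $p\in(1,\infty)$, and since $M_q$ is bounded on $L^p(\R^n)$ for every $p>q$ (because $M_qf=(M(|f|^q))^{1/q}$ and $M$ is bounded on $L^{p/q}(\R^n)$ for $p/q>1$), we get $M_qf\in L^p(\R^n)$ for all $p\in(q,\infty)$. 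In particular $M_qf$ is finite almost everywhere and lies in some $L^p(\R^n)$.

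\medskip

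Next I would feed this into~\eqref{eq:pointwise-2}, which gives $(\Op(a)f)^\#(x)\le C_q(M_qf)(x)$ pointwise on $\R^n$; hence $(\Op(a)f)^\#\in L^p(\R^n)\subset S_0(\R^n)$ for any fixed $p\in(q,\infty)$. It remains to pass from $(\Op(a)f)^\#$ to $\Op(a)f$ itself. For this I would use the standard fact that $g^\#\in S_0(\R^n)$, or more simply $g^\#\in L^p(\R^n)$ for some $p<\infty$, forces $g\in S_0(\R^n)$. One way to see this: by the Fefferman--Stein sharp maximal inequality on $L^p(\R^n)$ (valid for $1<p<\infty$), $\|g\|_{L^p}\le c_p\|g^\#\|_{L^p}$ whenever $g\in L^p(\R^n)$; but a priori we do not yet know $\Op(a)f\in L^p(\R^n)$, so I would instead argue directly. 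Since $f$ has compact support, say $\operatorname{supp} f\subset B(0,R)$, the kernel estimates for symbols admitting~\eqref{eq:pointwise-2} (these are the same symbol classes for which the standard Calder\'on--Zygmund kernel bounds away from the diagonal hold) show that $\Op(a)f$ decays at infinity: $|(\Op(a)f)(x)|\le C_N|x|^{-N}$ for $|x|\ge 2R$ and any $N$, by integration by parts in the oscillatory integral exploiting that $a$ has the right decay in $\xi$ and that $|x-y|$ is bounded below. Hence $\Op(a)f$ is bounded on $\R^n$ (it is smooth and bounded near the support region and decays far away), so $\Op(a)f\in L^p(\R^n)$ for every $p\in(1,\infty]$, and in particular $\Op(a)f\in S_0(\R^n)$ directly.

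\medskip

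\textbf{Main obstacle.} The only genuine subtlety is that the pointwise estimate~\eqref{eq:pointwise-2} bounds $(\Op(a)f)^\#$ but says nothing a priori about the membership of $\Op(a)f$ in any function space; the Fefferman--Stein reverse inequality requires one to already know $\Op(a)f$ lies in $S_0(\R^n)$ (or in $L^p$) before it can be applied. I expect the cleanest route to avoid this circularity is the one sketched above: use the off-diagonal decay of $\Op(a)f$ coming from the smoothness and $\xi$-decay of the symbol together with the compact support of $f$, which gives $\Op(a)f\in L^\infty(\R^n)$ outright, whence $\Op(a)f\in L^1(\R^n)+L^\infty(\R^n)\subset S_0(\R^n)$. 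This is the step that requires care, but it is standard kernel analysis and involves no new ideas beyond those already present in the references cited for Theorem~\ref{th:pointwise}.
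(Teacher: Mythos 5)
Your first half is sound: $M_qf\in L^p(\R^n)$ for every $p>q$, hence $(\Op(a)f)^\#\in L^p(\R^n)$ by \eqref{eq:pointwise-2}, and you correctly identify the real issue, namely that a bound on $(\Op(a)f)^\#$ alone says nothing about $\Op(a)f$ without some a priori information. But the way you close this gap is not justified for case (b). Your claim $|(\Op(a)f)(x)|\le C_N|x|^{-N}$ for \emph{every} $N$ rests on repeated integration by parts in $\xi$, which needs arbitrarily many $\xi$-derivatives of the symbol, each gaining a factor $(1+|\xi|)^{-\rho}$. A symbol in the Miyachi class $S_{\rho,\delta}^{n(\rho-1)}(\varkappa,n)$ has only $k'=n-1$ classical $\xi$-derivatives (plus a second-difference condition of order $1$), and the class admits $\rho=0$, in which case a $\xi$-derivative gains nothing at all; so neither rapid decay nor ``standard Calder\'on--Zygmund kernel bounds away from the diagonal'' follow by routine integration by parts here --- this finite-smoothness kernel analysis is precisely the delicate part of \cite{MY87}, not a standard step. (For the smooth H\"ormander class (a), where $\rho>0$ and all derivatives are available, your argument does go through.) A smaller slip: the closing inclusion $L^1(\R^n)+L^\infty(\R^n)\subset S_0(\R^n)$ is false (nonzero constants lie in the left-hand side); what actually saves you, when the decay is available, is membership of $\Op(a)f$ in $L^p(\R^n)$ for some finite $p$, which you do state earlier.

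The paper's own proof avoids kernels entirely and treats both symbol classes uniformly: from \eqref{eq:pointwise-2} together with the classical Fefferman--Stein inequality on $L^p(\R^n)$ and the $L^p$-boundedness of $M_q$ for $p>q$, it concludes that $\Op(a)$ extends to a bounded operator on $L^p(\R^n)$ for every $p\in(q,\infty)$ (this is the standard argument from the weighted-$L^p$ literature cited for Theorem~\ref{th:pointwise}, where the a priori integrability required in the Fefferman--Stein step is taken care of), and then simply observes $\Op(a)f\in L^p(\R^n)\subset S_0(\R^n)$. To repair your route you would either have to quote the $L^p$-boundedness results of \cite{MY87,MRS12} outright, or redo their finite-smoothness kernel estimates for the non-smooth class --- in other words, the ``standard kernel analysis'' you defer to is exactly the hard part.
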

\begin{proof}
By using the well-known $L^p$-estimates for the sharp maximal function
(see \cite[Theorem~5]{FS72}) and for the maximal function $M_q$,
one can show that if \eqref{eq:pointwise-2} holds for all $f\in C_0^\infty(\R^n)$,
then $\Op(a)$ extends to a bounded operator on $L^p(\R^n)$ for $q<p<\infty$.
In particular, this implies that $\Op(a)f\in S_0(\R^n)$ for every function
$f\in C_0^\infty(\R^n)$.
\end{proof}
\subsection{Proof of Theorem~\ref{th:main}}\label{subsec:proof}
The presented proof is an adaptation of the proof of \cite[Theorem~1.2]{KS13a}.
Its idea goes back to Miller \cite{M82}.
Suppose $f\in C_0^\infty(\R^n)$. Then $\Op(a)f\in S_0(\R^n)$ in view of Corollary~\ref{co:So}.
By Lerner's theorem (Theorem~\ref{th:Lerner}), there exists a constant
$C_\#>0$ such that
\begin{equation}\label{eq:proof-1}
\|\Op(a)f\|_{X(\R^n)}\le C_\#\|(\Op(a)f)^\#\|_{X(\R^n)}
\end{equation}
Further, by the crucial pointwise estimate (Theorem~\ref{th:pointwise}),
for every $q\in(1,\infty)$, there is a constant $C_q>0$ such that
\[
(\Op(a)f)^\#(x)\le C_q (M_qf)(x)\quad (x\in\R^n)
\]
Hence, by Axioms (A1) and (A2),
\begin{equation}\label{eq:proof-2}
\|(\Op(a)f)^\#\|_{X(\R^n)}\le C_q\|M_qf\|_{X(\R^n)}.
\end{equation}
On the other hand, since $M$ is bounded on $X(\R^n)$, by the Lerner-P\'erez
theorem (Theorem~\ref{th:Lerner-Perez}), there is a constant exponent
$q_0\in (1,\infty)$ and a constant $C_{q_0}'>0$ such that
\begin{equation}\label{eq:proof-3}
\|M_{q_0}f\|_{X(\R^n)}\le C_{q_0}'\|f\|_{X(\R^n)}.
\end{equation}
Thus, combining \eqref{eq:proof-1}--\eqref{eq:proof-3}, we arrive at
\[
\|\Op(a)f\|_{X(\R^n)}\le C_\#C_{q_0}C_{q_0}'\|f\|_{X(\R^n)}
\]
for all $f\in C_0^\infty(\R^n)$.
It remains to recall that, in view of Lemma~\ref{le:density}, $C_0^\infty(\R^n)$
is dense in $X(\R^n)$ whenever $X(\R^n)$ is separable.
Thus, $\Op(a)$ extends to a bounded operator on the whole space
$X(\R^n)$ by continuity.
\qed
\section{Pseudodifferential operators on variable Lebesgue spaces}\label{sec:VLE}
\subsection{Variable Lebesgue spaces}\label{subsec:VLE}
Let $p\colon\R^n\to[1,\infty]$ be a measurable a.e. finite function. By
$L^{p(\cdot)}(\R^n)$ we denote the set of all complex-valued functions
$f$ on $\R^n$ such that
\[
I_{p(\cdot)}(f/\lambda):=\int_{\R^n} |f(x)/\lambda|^{p(x)} dx <\infty
\]
for some $\lambda>0$. This set becomes a Banach function space when
equipped with the norm
\[
\|f\|_{p(\cdot)}:=\inf\big\{\lambda>0: I_{p(\cdot)}(f/\lambda)\le 1\big\}.
\]
It is easy to see that if $p$ is constant, then $L^{p(\cdot)}(\R^n)$ is nothing but
the standard Lebesgue space $L^p(\R^n)$. The space $L^{p(\cdot)}(\R^n)$
is referred to as a \textit{variable Lebesgue space}.

We will always suppose that
\begin{equation}\label{eq:exponents}
1<p_-:=\operatornamewithlimits{ess\,inf}_{x\in\R^n}p(x),
\quad
\operatornamewithlimits{ess\,sup}_{x\in\R^n}p(x)=:p_+<\infty.
\end{equation}
Under these conditions, the space $L^{p(\cdot)}(\R^n)$ is
separable and reflexive, and its associate space is isomorphic to
$L^{p'(\cdot)}(\R^n)$, where
\[
1/p(x)+1/p'(x)=1 \quad(x\in\R^n)
\]
(see e.g. \cite[Chap.~2]{CF13} or \cite[Chap.~3]{DHHR11}).
\subsection{The Hardy-Littlewood maximal function on variable Lebesgue spaces}
\label{subsec:M-VLE}
By $\cM(\R^n)$
denote the set of all measurable functions $p:\R^n\to[1,\infty]$
such that \eqref{eq:exponents} holds and the Hardy-Littlewood
maximal operator is bounded on $L^{p(\cdot)}(\R^n)$.

Assume that \eqref{eq:exponents} is fulfilled. Diening \cite{D04}
proved that if $p$ satisfies
\begin{equation}\label{eq:log-Hoelder}
|p(x)-p(y)|\le\frac{c}{\log(e+1/|x-y|)}\quad (x,y\in\R^n)
\end{equation}
for some $c>0$ independent of $x,y\in\R^n$
and $p$ is constant outside some ball, then $p\in\cM(\R^n)$. Further, the behavior
of $p$ at infinity was relaxed by Cruz-Uribe, Fiorenza, and Neugebauer \cite{CFN03,CFN04},
where it was shown that if $p$ satisfies \eqref{eq:log-Hoelder} and
there exists a $p_\infty>1$ such that
\begin{equation}\label{eq:Hoelder-infinity}
|p(x)-p_\infty|\le\frac{c}{\log(e+|x|)}\quad (x\in\R^n)
\end{equation}
with $c>0$ independent of $x\in\R^n$,
then $p\in\cM(\R^n)$. Following \cite[Section~4.1]{DHHR11}, we will
say that if conditions
\eqref{eq:log-Hoelder}--\eqref{eq:Hoelder-infinity} are fulfilled, then
$p$ is {\em globally log-H\"older continuous}. The class of all
globally log-H\"older continuous exponents will be denoted by $\cP^{\log}(\R^n)$.

Conditions \eqref{eq:log-Hoelder} and \eqref{eq:Hoelder-infinity} are optimal
for the boundedness of $M$ in the  pointwise sense; the corresponding
examples are contained in \cite{PR01} and \cite{CFN03}.
However, neither \eqref{eq:log-Hoelder} nor \eqref{eq:Hoelder-infinity}
is necessary for $p\in\cM(\R^n)$. Nekvinda \cite{N04} proved that if $p$
satisfies \eqref{eq:exponents}--\eqref{eq:log-Hoelder} and
\begin{equation}\label{eq:Nekvinda}
\int_{\R^n}|p(x)-p_\infty|c^{1/|p(x)-p_\infty|}\,dx<\infty
\end{equation}
for some $p_\infty>1$ and $c>0$, then $p\in\cM(\R^n)$. One can show that
\eqref{eq:Hoelder-infinity} implies \eqref{eq:Nekvinda}, but the converse,
in general, is not true. The corresponding example is constructed in \cite{CCF07}.
Nekvinda further relaxed condition \eqref{eq:Nekvinda} in \cite{N08}.
Lerner \cite{L05} (see also \cite[Example~5.1.8]{DHHR11})
showed that there exist discontinuous at zero or/and at infinity exponents,
which nevertheless belong to $\cM(\R^n)$. Thus, the class of exponents
in $\cP^{\log}(\R^n)$ satisfying \eqref{eq:exponents} is a proper subset
of the class $\cM(\R^n)$.

We will need the following remarkable result proved in \cite[Theorem~8.1]{D05}
(see also \cite[Theorem~5.7.2]{DHHR11}).
\begin{theorem}[Diening]
\label{th:Diening}
We have $p\in\cM(\R^n)$ if and only if $p'\in\cM(\R^n)$.
\end{theorem}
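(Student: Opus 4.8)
The plan is to characterize the boundedness of $M$ on $L^{p(\cdot)}(\R^n)$ by a condition that is visibly symmetric under the exchange $p\leftrightarrow p'$, and then to read off the asserted equivalence. Throughout one uses that, because of \eqref{eq:exponents}, $L^{p(\cdot)}(\R^n)$ is reflexive and its K\"othe dual is $L^{p'(\cdot)}(\R^n)$, the associate norm being equivalent, with a universal constant, to the Luxemburg norm. First I would pass to the dyadic setting: fixing the $3^n$ standard translated dyadic grids and writing $M^{d_1},\dots,M^{d_{3^n}}$ for the corresponding dyadic maximal operators, one has $Mf\le C\sum_{j}M^{d_j}f$ pointwise while each $M^{d_j}f\le Mf$; hence $M$ is bounded on $L^{p(\cdot)}(\R^n)$ if and only if every $M^{d_j}$ is.

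The core of the argument is to replace each dyadic maximal operator by averaging operators. For a family $\mathcal Q$ of pairwise disjoint cubes from a fixed translated dyadic grid $d$, set $A_{\mathcal Q}f:=\sum_{Q\in\mathcal Q} f_Q\,\chi_Q$. I claim that $M^{d}$ is bounded on $L^{p(\cdot)}(\R^n)$ if and only if $\sup_{\mathcal Q}\|A_{\mathcal Q}\|_{L^{p(\cdot)}(\R^n)\to L^{p(\cdot)}(\R^n)}<\infty$, the supremum over all such $\mathcal Q$. One direction is immediate, since $|A_{\mathcal Q}f|\le M^{d}|f|$ pointwise. The converse is the substantial point: given $f\ge 0$, take for each $k\in\mathbb{Z}$ the maximal dyadic cubes on which $f_Q>2^{k}$, forming a disjoint family $\mathcal Q_k$ with union $\Omega_k$; then $M^{d}f$ is pointwise comparable to $\sum_k 2^{k}\chi_{\Omega_k\setminus\Omega_{k+1}}$, and one must bound the $L^{p(\cdot)}(\R^n)$-norm of this function by $\sup_{\mathcal Q}\|A_{\mathcal Q}f\|_{L^{p(\cdot)}(\R^n)}$. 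For a constant exponent this is a one-line geometric summation; for a variable exponent the norm of a sum of functions supported on disjoint level sets does not split additively, and one needs Diening's modular iteration, in which the hypotheses $p_->1$ and $p_+<\infty$ are essential. This is the step I expect to be the main obstacle.

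The final step is self-duality. Each averaging operator $A_{\mathcal Q}$ is self-adjoint for the bilinear pairing $\langle f,g\rangle=\int_{\R^n}fg$, because $\langle A_{\mathcal Q}f,g\rangle=\sum_{Q\in\mathcal Q} f_Q\,g_Q\,|Q|=\langle f,A_{\mathcal Q}g\rangle$. Since the norm of $L^{p(\cdot)}(\R^n)$ is equivalent, with a universal constant, to the norm of the K\"othe dual of $L^{p'(\cdot)}(\R^n)$, and symmetrically, the operator norm of $A_{\mathcal Q}$ on $L^{p(\cdot)}(\R^n)$ and its operator norm on $L^{p'(\cdot)}(\R^n)$ differ by at most a universal factor, uniformly in $\mathcal Q$. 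Hence $\sup_{\mathcal Q}\|A_{\mathcal Q}\|_{L^{p(\cdot)}(\R^n)\to L^{p(\cdot)}(\R^n)}<\infty$ if and only if $\sup_{\mathcal Q}\|A_{\mathcal Q}\|_{L^{p'(\cdot)}(\R^n)\to L^{p'(\cdot)}(\R^n)}<\infty$. Combining this with the two preceding paragraphs, applied to each of the $3^n$ grids, shows that $M$ is bounded on $L^{p(\cdot)}(\R^n)$ precisely when it is bounded on $L^{p'(\cdot)}(\R^n)$, that is, $p\in\cM(\R^n)$ if and only if $p'\in\cM(\R^n)$.
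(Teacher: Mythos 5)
Your overall strategy is the right one -- it is, in outline, exactly how Diening's theorem is actually proved -- but be aware that the paper itself offers no proof: Theorem~\ref{th:Diening} is quoted from \cite[Theorem~8.1]{D05} (see also \cite[Theorem~5.7.2]{DHHR11}), so the comparison here is with the known argument in those sources rather than with anything in the text.

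Measured against that argument, your proposal has a genuine gap at the decisive step, and you say so yourself. The reduction to shifted dyadic grids, the trivial direction $|A_{\mathcal{Q}}f|\le M^{d}|f|$, and the self-adjointness/associate-space argument showing that $\sup_{\mathcal{Q}}\|A_{\mathcal{Q}}\|_{L^{p(\cdot)}\to L^{p(\cdot)}}<\infty$ iff $\sup_{\mathcal{Q}}\|A_{\mathcal{Q}}\|_{L^{p'(\cdot)}\to L^{p'(\cdot)}}<\infty$ are all correct and easy. But these pieces only give: $M$ bounded on $L^{p(\cdot)}(\R^n)$ $\Rightarrow$ the averaging operators are uniformly bounded on $L^{p(\cdot)}(\R^n)$, hence also on $L^{p'(\cdot)}(\R^n)$. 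To conclude $p'\in\cM(\R^n)$ you must pass back from uniform boundedness of the $A_{\mathcal{Q}}$ to boundedness of $M$, and that converse -- your Calder\'on--Zygmund level-set decomposition of $M^{d}f$ together with the summation of $\sum_k 2^k\chi_{\Omega_k\setminus\Omega_{k+1}}$ in the $L^{p(\cdot)}$-norm -- is precisely the deep content of Diening's theorem. It is not a routine ``modular iteration'' one can wave at: in \cite{D05} and in \cite[Chap.~5]{DHHR11} this implication (that the averaging condition, Diening's class $\mathcal{A}$, implies boundedness of $M$) occupies the bulk of the proof and requires substantial machinery beyond \eqref{eq:exponents}. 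As written, then, your argument establishes only the trivial half of the equivalence you need, and the theorem does not follow; to make the proposal complete you would either have to carry out that implication or cite it explicitly, at which point you are citing essentially the theorem itself.
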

We refer to the recent monographs \cite{CF13,DHHR11} for further discussions
concerning the class $\cM(\R^n)$.
\subsection{Boundedness of pseudodifferential operators on variable Lebesgue spaces}
\label{subsec:PDO-VLE}
Combining Theorem~\ref{th:main} and Theorem~\ref{th:Diening}, we immediately
arrive at the following.
\begin{theorem}
Suppose $p\in\cM(\R^n)$. If $a$ belongs to one of the following symbol classes:
\begin{enumerate}
\item[{\rm(a)}]
the H\"ormander class
$S_{\rho,\delta}^{n(\rho-1)}$ with $0<\rho\le 1$ and $0\le\delta<1$;

\item[{\rm(b)}]
the Miyachi class
$S_{\rho,\delta}^{n(\rho-1)}(\varkappa,n)$ with $0\le\delta\le\rho\le 1$, $0\le\delta<1$,  and $\varkappa>0$;
\end{enumerate}
then $\Op(a)$ extends to a bounded operator on $L^{p(\cdot)}(\R^n)$.
\end{theorem}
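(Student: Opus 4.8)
The plan is to deduce the final theorem directly from Theorem~\ref{th:main} by verifying its hypotheses when $p\in\cM(\R^n)$. The only work is bookkeeping with the variable Lebesgue space structure, all of which is already recalled in Subsection~\ref{subsec:VLE}. First I would observe that the condition $p\in\cM(\R^n)$ presupposes \eqref{eq:exponents}, so $L^{p(\cdot)}(\R^n)$ is a separable Banach function space whose associate space is (isomorphic to) $L^{p'(\cdot)}(\R^n)$; this takes care of the separability requirement in Theorem~\ref{th:main}.

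Next I would address the two boundedness requirements on the maximal operator. By definition of $\cM(\R^n)$, the Hardy-Littlewood maximal operator $M$ is bounded on $L^{p(\cdot)}(\R^n)$. The point is that $M$ is also bounded on the associate space. Here I would invoke Theorem~\ref{th:Diening}: since $p\in\cM(\R^n)$, also $p'\in\cM(\R^n)$, which means $M$ is bounded on $L^{p'(\cdot)}(\R^n)$, hence on $X'(\R^n)$ with $X(\R^n)=L^{p(\cdot)}(\R^n)$. (One should note that passing from $L^{p'(\cdot)}(\R^n)$ to the genuine associate space is harmless, since they coincide up to equivalence of norms, and boundedness of an operator is preserved under passage to an equivalent norm.)

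With both hypotheses verified, Theorem~\ref{th:main} applies verbatim: whenever $a$ lies in the H\"ormander class $S_{\rho,\delta}^{n(\rho-1)}$ with $0<\rho\le 1$, $0\le\delta<1$, or in the Miyachi class $S_{\rho,\delta}^{n(\rho-1)}(\varkappa,n)$ with $0\le\delta\le\rho\le 1$, $0\le\delta<1$, $\varkappa>0$, the operator $\Op(a)$ extends to a bounded operator on $X(\R^n)=L^{p(\cdot)}(\R^n)$. This is precisely the claim.

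I do not anticipate a genuine obstacle here; the statement is explicitly advertised in the excerpt as an immediate consequence of Theorem~\ref{th:main} and Theorem~\ref{th:Diening}. If anything, the only subtlety worth a sentence is the identification of the associate space of $L^{p(\cdot)}(\R^n)$ with $L^{p'(\cdot)}(\R^n)$ only up to an isomorphism (equivalence of norms), which is why one phrases Theorem~\ref{th:Diening} in terms of the class $\cM(\R^n)$ rather than in terms of the associate space directly; once that is noted, the argument is complete.
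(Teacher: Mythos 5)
Your proposal is correct and follows exactly the route of the paper: the theorem is deduced by applying Theorem~\ref{th:main} with $X(\R^n)=L^{p(\cdot)}(\R^n)$, using the separability and associate-space facts from Subsection~\ref{subsec:VLE} together with Theorem~\ref{th:Diening} to get boundedness of $M$ on $X'(\R^n)$. Your remark about the associate space only being isomorphic (with equivalent norm) to $L^{p'(\cdot)}(\R^n)$ is a fair point of care that the paper passes over silently, and it does no harm.
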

Part (a) of the above theorem was obtained by the author and
Spitkovsky in \cite[Theorem~1.2]{KS13a}. Part (b) is new.
\begin{corollary}[Rabinovich-Samko]
Let $p\in\cP^{\log}(\R^n)$ satisfy \eqref{eq:exponents}. If $a\in S_{1,0}^0$,
then $\Op(a)$ extends to a bounded operator on $L^{p(\cdot)}(\R^n)$.
\end{corollary}
\begin{proof}
This statement immediately follows from part (a) of the previous result
because $\cP^{\log}(\R^n)$ is a (proper!) subset of $\cM(\R^n)$.
\end{proof}
This result was proved in \cite[Theorem~5.1]{RS08}.

\end{document}